\documentclass[11pt]{article}

\usepackage{fullpage}
\usepackage{amsmath, amsthm, amsfonts, amssymb, amstext, mathrsfs, enumerate}
\usepackage{graphicx, ragged2e, lscape, framed, xcolor}
\usepackage{subfiles}
\usepackage[T1]{fontenc}

\theoremstyle{plain}
\newtheorem{theorem}{Theorem}[section]
\newtheorem{lemma}[theorem]{Lemma}

\newtheorem{conjecture}[theorem]{Conjecture}
\newtheorem{corollary}[theorem]{Corollary}
\newtheorem{problem}[theorem]{Problem}

\numberwithin{equation}{section}
\allowdisplaybreaks

\newcommand{\affl}[3]{\noindent #1, Email: {\tt #2}\\ \textsc{#3}\\[1.5pt]}

\usepackage[pagebackref]{hyperref}
\hypersetup{
	colorlinks=true,
    urlcolor=purple,
	linkcolor=purple,
    citecolor=purple,
}

\DeclareMathOperator{\diam}{diam}
\DeclareMathOperator{\PG}{PG}

\title{\textbf{An improved bound for the strong clique index of graphs}}
\author{Hitesh Kumar \and Bojan Mohar \and Shivaramakrishna Pragada}
\date{}

\begin{document}
\maketitle
\begin{abstract} For a graph $G$ with line graph $L(G)$, $\chi(L(G)^2)$ and $\omega(L(G)^2)$ are called the \emph{strong chromatic index} and \emph{strong clique index} of $G$, respectively. A well-known conjecture of Erd\H{o}s and Ne\v{s}et\v{r}il (1985) posits that $\chi(L(G)^2)\le \frac{5}{4}\Delta(G)^2$. Related to that, 
 Faudree, Gy\'{a}rf\'{a}s, Schelp and Tuza (1990) conjectured that $\omega(L(G)^2) \le \frac{5}{4}\Delta(G)^2$. We show that $\omega(L(G)^2) \le \frac{2607}{1987}\Delta(G)^2 < \frac{21}{16}\Delta(G)^2$ improving the upper bound $\frac{4}{3}\Delta(G)^2$ of Faron and Postle. Indeed, we make progress towards a stronger conjecture of Faron and Postle in terms of Ore-degree. 

 For positive integers $\Delta$ and $t$, let $h_t(\Delta)$ denote the smallest integer such that any graph $G$ with size at least $h_t(\Delta)$ and maximum degree $\Delta(G)\le \Delta$, contains two edges with distance at least $t$. An old problem of Erd\H{o}s and Ne\v{s}et\v{r}il (1986) concerns estimating the quantity $h_t(\Delta)$ and can be thought of as the edge-version of the degree-diameter problem. Chung, Gy\'{a}rf\'{a}s, Tuza and Trotter established the sharp inequality $h_2(\Delta)\le \frac{5}{4}\Delta^2+1$. We disprove two conjectures of Cambie, Cames van Batenburg, Joannis de Verclos and Kang concerning the next open case $h_3(\Delta)$. 
\end{abstract}

\noindent
\textbf{Keywords:} Strong clique index, Square of a line graph, Strong chromatic index, Degree-diameter problem, Projective plane

\noindent
\textbf{MSC2020:} 05C12, 05C76, 51E15

\section{Introduction}

For a finite simple graph $G$ and a positive integer $k$, the \emph{$k$-th power} of $G$, denoted by $G^k$, is the graph with vertex set $V(G)$ and two vertices are adjacent in $G^k$ if and only if they are at distance at most $k$ in $G$. The graph $G^2$ is often referred to as the \emph{square} of $G$. We denote the \emph{line graph} of $G$ by $L(G)$. The quantities $\chi(L(G)^2)$ and $\omega(L(G)^2)$ are called the \emph{strong chromatic index} and \emph{strong clique index} of $G$, respectively. 

\subsection{Strong clique index}

There is a vast literature on colorings of graph powers, particularly graph squares. We refer the reader to the excellent survey by Cranston \cite{Cranston_2023}. A well-known conjecture in this area concerns the chromatic number of squares of line graphs, a.k.a. the strong chromatic index of $G$. In the 1980s, Erd\H{o}s and Ne\v{s}et\v{r}il (see \cite{Halasz_Sos_1989, Faudree_Gyarfas_Schelp_Tuza_1989}) proposed the following conjecture relating strong chromatic index and maximum degree of graphs.

\begin{conjecture}[Erd\H{o}s--Ne\v{s}et\v{r}il \cite{Halasz_Sos_1989, Faudree_Gyarfas_Schelp_Tuza_1989}]\label{conj:chi_line_square} For any graph $G$, 
\[ \chi(L(G)^2) \le \frac{5}{4}\Delta(G)^2.\]
\end{conjecture}

Apparently, inspired by the difficulty of this conjecture, the following weaker conjecture was made by Faudree, Gy\'{a}rf\'{a}s, Schelp, and Tuza \cite{Faudree_Gyarfas_Schelp_Tuza_1990} concerning the strong clique index. 

\begin{conjecture}[Faudree--Gy\'{a}rf\'{a}s--Schelp--Tuza \cite{Faudree_Gyarfas_Schelp_Tuza_1990}]\label{conj:omega_line_square}
    For any graph $G$, 
    \[\omega(L(G)^2) \le \frac{5}{4}\Delta(G)^2.\]
\end{conjecture}

Both of the above conjectures are tight for the blowup $C_5^{(t)}$ of cycle $C_5$, i.e., the graph obtained from $C_5$ by replacing each vertex with an independent set of size $t$ and replacing edges with complete bipartite graphs. One can check that $\Delta(C_5^{(t)}) = 2t$ and $L(C_5^{(t)})^2$ is a complete graph of order $5t^2 = \frac{5}{4}\Delta(C_5^{(t)})^2$.

\'{S}leszy\'{n}ska-Nowak \cite{Nowak_2016} proved the general upper bound $\frac{3}{2}\Delta(G)^2$ for $\omega(L(G)^2)$ which significantly improves on the trivial upper bound $2\Delta(G)^2$. Soon after, Faron and Postle \cite{Faron_Postle_2019} established the following bound, which is the best-known general upper bound to date.

\begin{theorem}[\cite{Faron_Postle_2019}]\label{thm:Faron_Postle_bound} For any graph $G$, 
\[\omega(L(G)^2) \le \frac{4}{3}\Delta(G)^2.\]      
\end{theorem}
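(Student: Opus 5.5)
We prove the statement in the form $|C| \le \tfrac43\Delta^2$, where $C$ is the edge set of a clique in $L(G)^2$ and $\Delta=\Delta(G)$. Being a clique means every two edges of $C$ share a vertex or are joined by an edge of $G$. We may assume $G$ is the graph spanned by $C$ together with the connecting edges; these connecting edges need not lie in $C$, but they still count toward degrees.

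The plan is to fix an edge $uv\in C$ and classify each other edge $e\in C$ by how it reaches $uv$. Either $e$ is incident to $u$ or $v$, or $e$ has an endpoint adjacent to $u$ or $v$. Let $A=N(u)\setminus\{v\}$ and $B=N(v)\setminus\{u\}$. Then every edge of $C$ meets $\{u,v\}\cup A\cup B$, and $|A|,|B|\le \Delta-1$.

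Counting edges incident to $A\cup B$ naively gives about $2\Delta^2$. To do better we double count. Summing over all edges $f\in C$ the number of edges of $C$ within distance one of $f$ gives $|C|(|C|-1)$. For $f=xy$ this number is at most roughly
\[
\sum_{z\in N(x)\cup N(y)} d_C(z) \;-\; (\text{overcount}),
\]
where $d_C(z)$ is the number of $C$-edges at $z$. The key quantity is therefore $\sum_{f=xy\in C}\sum_{z\in N(x)\cup N(y)} d_C(z)$, which we rewrite as a sum over paths $z$--$x$--$y$ with the last edge in $C$.

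The more promising route, following \'{S}leszy\'{n}ska-Nowak's approach, is to choose a vertex $w$ of maximum $C$-degree, or an edge $uv$ maximizing $d_C(u)+d_C(v)$, and split the clique as $C = C_1\cup C_2$. Here $C_1$ consists of the edges incident to $N(u)\cup N(v)$ that are ``counted twice'' (incident to both sides), and $C_2$ is the rest. We then bound $|C|\le 2\Delta^2 - (\text{savings})$. The savings come from the fact that two edges of $C$ hanging from $A$ and from $B$ respectively, but far from each other, must still be joined by a $G$-edge. This forces many vertices of $A\cup B$ to have degree mostly used up by connecting edges rather than $C$-edges.

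Concretely, we would set $x=$ the number of $C$-edges with an endpoint in $A\setminus B$ and no endpoint in $B$, define $y$ symmetrically, and let $z$ count the rest. For edges counted by $x$ and $y$ to be pairwise close, each such pair needs an adjacency. Counting these adjacencies against the degree budget $\Delta$ per vertex gives an inequality of the form $xy \lesssim \Delta\cdot(\text{stuff})\cdot\Delta$. Combining this with $x+z\le\Delta^2$ and $y+z\le\Delta^2$ and optimizing yields $\tfrac43\Delta^2$.

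The main obstacle is this last step: making the adjacency count between the two far-apart families sharp enough to reach exactly $4/3$ rather than $3/2$. We would first try averaging over the choice of $uv\in C$, weighted by $d_C(u)+d_C(v)$, to exploit the Ore-degree quantity $\max_{uv}(d(u)+d(v))$. We would then solve the resulting quadratic optimization.
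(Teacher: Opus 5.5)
You have a genuine gap. The step that carries the whole proof is never formulated: it is left as an unspecified factor, and you concede it is the main obstacle. That step is an inequality coupling $x$ and $y$ sharp enough to give $\tfrac43$.

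The mechanism you suggest for it also fails as described. Counting adjacencies between the two families against a per-vertex budget of $\Delta$ cannot bound $xy$, because a single $G$-edge $st$ certifies up to $d_C(s)\,d_C(t)\le\Delta^2$ pairs at once.

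Quantitatively, the constraints $|C|\le x+y+z+1$, $x+z\le\Delta^2$, $y+z\le\Delta^2$ and $xy\le c\Delta^4$ yield only $|C|\le(1+\sqrt{c})\Delta^2+1$. This is attained at $x=y=\sqrt{c}\,\Delta^2$, so you would need $c=\tfrac19$. For an arbitrary $uv\in C$ this is false. Take the blow-up of $C_5$ whose parts $V_1,\dots,V_5$ (in cyclic order) have sizes $1,1,\tfrac{\Delta}{2},\Delta-1,\tfrac{\Delta}{2}$, with $V_1=\{u\}$ and $V_2=\{v\}$. Its maximum degree is $\Delta$ and $L(G)^2$ is complete. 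Here $A=V_5$ and $B=V_3$, so $z=0$ and $x=y=\tfrac{\Delta^2}{2}$, giving $xy=\tfrac{\Delta^4}{4}$. The value $c=\tfrac14$ only recovers the $\tfrac32\Delta^2$ bound. Any gain would have to come from a special choice of $uv$, and you give no argument for what that choice buys.

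The missing idea is the one the paper attributes to Faron and Postle. Work with the Ore-degree $\sigma=\sigma_G(H)$ instead of $\Delta$, and prove the stronger, hereditary statement $|E(H)|\le\tfrac13\sigma_G(H)^2$ by induction on $|E(H)|$.

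The kind of step that makes this induction work is visible in the proof of Lemma~\ref{lemma:bipartite_subgraph}. Take $v$ of maximum $H$-degree and $A=N_H(v)$. Every $a\in A$ is adjacent to an endpoint of each $H$-edge $xy$ lying outside $N_G[v]$. Hence these far edges form a smaller clique instance in $G[B]$ with $\deg_{G[B]}(x)+\deg_{G[B]}(y)\le\sigma-|A|$. Only this sum is guaranteed to drop, not the individual degrees. That is why the induction must run on the Ore-degree rather than on $\Delta$, and weighting your choice of $uv$ by $d_C(u)+d_C(v)$ is no substitute.

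Theorem~\ref{thm:conditional_result} then turns a bound $\beta\sigma^2$ for smaller bipartite clique subgraphs into a bound $\tfrac{1+\beta}{4}\sigma^2$. Since $\beta=\tfrac13$ is the fixed point of $\beta\mapsto\tfrac{1+\beta}{4}$, the bound $\tfrac13\sigma^2$ propagates inductively. Finally $\sigma\le 2\Delta$ gives $\tfrac43\Delta^2$.
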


For further partial progress on Conjecture \ref{conj:omega_line_square}, we refer the reader to \cite{Debski_Nowak_2021, Debski_Nowak_2022, Cho_Choi_Kim_Park_2021, Batenburg_Kang_Pirot_2020} and the survey \cite{Cranston_2023}. 

We quickly summarize the approach of Faron and Postle \cite{Faron_Postle_2019}. They suggested that it is more helpful to work with the Ore-degree instead of the maximum degree. Recall that if $H$ is a non-empty subgraph of $G$, the \emph{Ore-degree} of $H$ in $G$ is defined to be
\[
   \sigma_G(H):=\max_{xy\in E(H)}(\deg_G(x)+\deg_G(y)).
\]
Let $\sigma_G(H) = 0$ if $H$ is an empty graph. They crucially identified the following stronger conjecture, which implies Conjecture \ref{conj:omega_line_square}.

\begin{conjecture}[\cite{Faron_Postle_2019}]\label{conj:bipartite_subgraph} Let $H$ be a bipartite subgraph of $G$ such that $E(H)$ forms a clique in $L(G)^2$. Then 
\[ |E(H)|\le \frac{1}{4} \ \sigma_G(H)^2.\]
\end{conjecture}

Indeed, they proved the following.

\begin{theorem}[\cite{Faron_Postle_2019}]\label{thm:conditional_result}
Let $H$ be a subgraph of $G$ such that $E(H)$ is a clique in $L(G)^2$. Let $\beta \in [\frac{1}{4}, \frac{1}{3}]$ be such that the following assumption holds: for any bipartite subgraph $H'$ of $H$ such that $|E(H')|<|E(H)|$, \[|E(H')|\le \beta \ \sigma_{G[V(H')]}(H')^2.\]
Then
\[ |E(H)|\le \left(\frac{1 + \beta}{4}\right)\sigma_G(H)^2. \]
\end{theorem}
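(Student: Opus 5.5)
The plan is to induct on the structure of $H$ by splitting $E(H)$ into a bipartite piece, controlled by the hypothesis with constant $\beta$, and a remainder, controlled by direct counting. Write $\sigma=\sigma_G(H)$.

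\emph{Bipartite case.} If $H$ itself is bipartite, the hypothesis does not apply to $H$, since it requires strictly fewer edges. Instead I would apply it to $H-e$ for a suitable edge $e$, or to the subgraph obtained by deleting a low-degree vertex. Since $\sigma_{G[V(H')]}(H')\le\sigma$, this gives $|E(H)|\le\beta\sigma^2+O(\sigma)$. For $\beta\le\frac13$ we have $\beta\le\frac{1+\beta}{4}$, with room to spare unless $\beta=\frac13$. The lower-order term has to be absorbed, either by choosing the deleted piece so that the induced Ore-degree drops, or by a separate direct argument. So the real work is the non-bipartite case.

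\emph{Non-bipartite case: the decomposition.} Fix an edge $uv\in E(H)$ maximizing $\deg_G(u)+\deg_G(v)$, and put $S=N_G(u)\cup N_G(v)$, so $|S|\le\sigma$. Because $E(H)$ is a clique in $L(G)^2$, every edge of $H$ is within distance one of $uv$, hence has an endpoint in $S$. Partition $E(H)=E_1\cup E_2$:
\begin{itemize}
\item $E_1$ is the set of edges with exactly one endpoint in $S$. These form a bipartite graph $H_1$ between $S$ and $V\setminus S$, and $H_1$ is a proper subgraph of $H$ because $H$ is not bipartite.
\item $E_2$ is the set of edges inside $S$.
\end{itemize}

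\emph{Bounding the two pieces.}
\begin{enumerate}
\item Apply the hypothesis to $H_1$: $|E_1|\le\beta\,\sigma_{G[V(H_1)]}(H_1)^2$. The key gain is that degrees are measured in $G[V(H_1)]$. An edge $xy$ with $x\in S$ loses from $\deg(x)$ all neighbours of $x$ outside $V(H_1)$, and $u,v$ themselves are not in $H_1$'s outer side. I want to show that $\sigma_{G[V(H_1)]}(H_1)\le\sigma-t$, where $t$ is related to the number of edges in $E_2$ or to the slack $\sigma-|S|$.
\item Bound $|E_2|$ by counting. Edges inside $S$ are incident to vertices of degree summing to at most $\sigma$. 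Using $|S|\le\sigma$, I would aim for an estimate like $|E_2|\le\frac{1}{4}\sigma^2-\frac{\beta}{\dots}(\dots)$, that is, one that trades against the loss $t$ from step 1.
\end{enumerate}

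\emph{Combining.} Finally I would optimize the combination
\[
\beta(\sigma-t)^2+f(t)\le\frac{1+\beta}{4}\sigma^2
\]
over the free parameter, where $f(t)$ denotes the bound on $|E_2|$. The restriction $\beta\in[\frac14,\frac13]$ should be exactly what makes the worst case land at $\frac{1+\beta}{4}$.

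\emph{Main obstacle.} The hardest step is coupling the two bounds: showing that many edges inside $S$ force a genuine drop in the induced Ore-degree of the bipartite part. For this I would first try averaging over the choice of the central edge $uv$, or over a max-cut of $H[S]$ moved into the bipartite side, rather than a fixed choice.
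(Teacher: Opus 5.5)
\textbf{There is a genuine gap, and the decomposition you chose cannot work.} The paper does not prove this statement; it quotes it from Faron and Postle, so there is no in-paper argument to compare with. On its own terms, your proposal never specifies its two key estimates: the drop in $\sigma_{G[V(H_1)]}(H_1)$ and the bound $f$ on $|E_2|$. More seriously, no choice of them can succeed.

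Test your scheme on the blow-up $C_5^{(k)}$ with $H=G$, so $\sigma=4k$ and $|E(H)|=5k^2$. Take $uv$ with $u\in V_1$ and $v\in V_2$. The graph is edge-transitive, so every choice of $uv$ looks the same, and averaging over $uv$ gains nothing. Then:
\begin{itemize}
\item $S=V_1\cup V_2\cup V_3\cup V_5$.
\item $E_2$ consists of the three classes $V_5V_1$, $V_1V_2$, $V_2V_3$, so $|E_2|=3k^2$.
\item $H_1=V_3V_4\cup V_4V_5$, and $G[V(H_1)]$ is a blown-up path, so $\sigma_{G[V(H_1)]}(H_1)=3k$.
\end{itemize}
Whatever you prove, your final inequality is at best $9\beta k^2+3k^2\le 4(1+\beta)k^2$. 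This holds only for $\beta\le\frac15$, so it is false on all of $[\frac14,\frac13]$.

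This is not an artifact of an unverified hypothesis. By Theorem~\ref{thm:bipartite_subgraph}, applied to $H'$ inside $G[V(H')]$, the hypothesis holds in every graph for every $\beta\ge\frac{620}{1987}$. For example, at $\beta=\frac13$ your scheme gives $6k^2$, which exceeds the target $\frac{16}{3}k^2$.

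Enlarging the bipartite side, as in your max-cut idea, only raises its induced Ore-degree. Four of the five classes form a bipartite subgraph with induced Ore-degree $4k$. That gives $16\beta k^2+k^2$, which exceeds $4(1+\beta)k^2$ for every $\beta>\frac14$.

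\textbf{What a working argument should look like.} The target $\frac{1+\beta}{4}\sigma^2=\frac{\sigma^2}{4}+\beta\bigl(\frac{\sigma}{2}\bigr)^2$ is met in $C_5^{(k)}$ as follows:
\begin{itemize}
\item Apply the hypothesis to a single class far from a vertex, e.g.\ $V_3V_4=H-N_G[v]$ for $v\in V_1$. Its induced Ore-degree is $2k=\sigma/2$.
\item Count the remaining $4k^2=\sigma^2/4$ edges, all incident with $N_G(v)$, directly.
\end{itemize}
So the bulk of the edges should be charged to the neighbourhood of a vertex. This is the template of the paper's proof of Lemma~\ref{lemma:bipartite_subgraph}: for a maximum-degree vertex $v$, the edges at $N_G(v)$ cost at most $\Delta(\sigma-\Delta)\le\sigma^2/4$. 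The hypothesis should be reserved for a small piece away from $v$. Its Ore-degree in $G[B]$ drops by $\deg_H(v)$, because every $a\in N_H(v)$ is adjacent to an endpoint of each of its edges.

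\textbf{The bipartite case.} Your separate treatment here is flawed: passing to $H-e$ gives $\beta\sigma^2+1$, which is not at most $\frac{1+\beta}{4}\sigma^2$ when $\beta=\frac13$. It is also unnecessary. Any decomposition that discards at least one edge lets you apply the hypothesis whether or not $H$ is bipartite. Your own $uv$ always lies in $E_2$, so $H_1$ always has fewer edges than $H$, regardless of bipartiteness.
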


Clearly, if Conjecture \ref{conj:bipartite_subgraph} holds with $\beta = \frac{1}{4}$, then the assumption in Theorem \ref{thm:conditional_result} holds with $\beta = \frac{1}{4}$ for the subgraph $H$ of $G$ whose edges induce the largest clique in $L(G)^2$, which in turn implies Conjecture \ref{conj:omega_line_square}. Faron and Postle observed that Conjecture \ref{conj:bipartite_subgraph} holds for $\beta = \frac{1}{3}$ inductively using Theorem \ref{thm:conditional_result}, thus establishing Theorem \ref{thm:Faron_Postle_bound}. 

We make non-trivial progress towards Conjecture \ref{conj:bipartite_subgraph} and show the following.

\begin{theorem}\label{thm:bipartite_subgraph}
Let $G$ be a graph and let $H$ be a bipartite subgraph of $G$. If
$E(H)$ is a clique in $L(G)^2$, then
\[
   |E(H)|\le \frac{620}{1987}\,\sigma_G(H)^2.
\]
\end{theorem}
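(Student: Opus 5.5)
The plan is to argue by induction on $|E(H)|$, with $\beta_0:=\frac{620}{1987}$. Fix a minimal counterexample: a graph $G$ and a bipartite $H\subseteq G$ with parts $A,B$, $E(H)$ a clique in $L(G)^2$, and $|E(H)|>\beta_0\sigma^2$, where $\sigma=\sigma_G(H)$. By minimality, every bipartite $H'\subsetneq H$ whose edges still form a clique satisfies $|E(H')|\le\beta_0\,\sigma_{G[V(H')]}(H')^2$. This is exactly the hypothesis of Theorem~\ref{thm:conditional_result} with $\beta=\beta_0$. By itself that theorem only yields the weaker constant $(1+\beta_0)/4>\beta_0$, so the bipartite structure must be exploited directly. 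Theorem~\ref{thm:conditional_result} (equivalently, its proof) will serve as a black box for the non-bipartite pieces that appear when we restrict to neighbourhoods.

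\textbf{Local counting.} For an edge $uv\in E(H)$, every other edge of $H$ meets $N_G(u)\cup N_G(v)$. Hence
\[
|E(H)|\le \sum_{w\in N_G(u)\cup N_G(v)}\deg_H(w)-(\text{edges counted twice}).
\]
Since $\deg_G(w)+\deg_G(w')\le\sigma$ along every edge $ww'$ of $H$, a vertex of $H$ of large degree forces all its $H$-neighbours to have small degree. Double counting the relation ``$e$ is covered through $N(u)\cup N(v)$'' over all edges $uv$ should give an inequality of the form $|E(H)|^2\le \sum_{uv}(\cdots)$. The right-hand side depends on the degree profile $(\deg_G(a))_{a\in A}$, $(\deg_G(b))_{b\in B}$ and on the overlap counts.

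\textbf{Splitting by degree.} Fix a threshold $\lambda\sigma$ and let $A_h,B_h$ be the high-degree vertices. The edges of $H$ split into those between $A_h$ and $B\setminus B_h$, those between $A\setminus A_h$ and $B_h$, and those between the two low-degree parts. Each class is again a bipartite clique-subgraph with strictly fewer edges, unless it is all of $H$. Its Ore-degree within $G[V(H')]$ is smaller, because the low side has degree at most $\lambda\sigma$ and the high side at most $(1-\lambda)\sigma$, so induction applies to it. Note that the edges between $A_h$ and $B_h$ are empty, since such an edge would have Ore-degree exceeding $\sigma$ when $\lambda>1/2$. The key extra input is a cross term: an edge in one class and an edge in another class must be joined by an edge of $G$. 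This uses up degree at the endpoints, and that degree is then unavailable for the covering count. Combining the inductive bounds $\beta_0(\cdot)^2$ on the pieces with the cross-term constraint gives a finite optimization over a few parameters: $\lambda$, the fractions of edges in each class, and the average degrees.

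\textbf{Main obstacle.} I expect the main difficulty to be the balanced dense case, where nearly all degrees are close to $\sigma/2$ (as in the $C_5$ blowup). There the splitting gives no gain, and one must use the bipartite/odd-cycle tension directly. In $C_5^{(t)}$ the extremal configuration is non-bipartite, so a bipartite $H$ cannot contain it wholly. First I would show that in this regime the auxiliary ``connecting'' edges of $G$ form a structure whose contribution is bounded by Theorem~\ref{thm:conditional_result}. Then I would solve the resulting linear/quadratic program. The constant $620/1987$ should be precisely the value at which the inductive bound closes, i.e.\ the fixed point of the resulting recursion $\beta\mapsto f(\beta)$. Most of the work will be checking that the optimum of this program is attained at the claimed point.
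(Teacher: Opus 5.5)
There is a genuine gap. What you describe is a plan, and the step that would actually produce $|E(H)|\le\beta_0\sigma^2$ is never carried out: the ``finite optimization'', the cross term and the balanced case are all left unspecified. The concrete claims you do make also do not hold up.
\begin{itemize}
\item In the degree split, a vertex of $A_h$ has $\deg_G\ge\lambda\sigma>\sigma/2$, so it cannot have degree at most $(1-\lambda)\sigma$. An edge from $A_h$ to $B\setminus B_h$, or between the two low parts, can still have Ore-degree exactly $\sigma$. Passing to $G[V(H')]$ lowers this only if you show that a definite number of $G$-neighbours of \emph{every} edge of the piece lie outside $V(H')$. That is the real difficulty, and it is not argued; without it, induction on the three classes only gives about $3\beta_0\sigma^2$.
\item Theorem~\ref{thm:conditional_result} cannot be applied to the ``connecting'' edges of $G$. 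They are not $H$-edges and need not be pairwise within distance $2$ in $L(G)$.
\item $620/1987$ is not the fixed point of any recursion. It is a rational upper bound for the true threshold $\beta^*\approx 0.312028$, certified by a sum-of-squares identity with strictly positive remainder, so $\beta^*<620/1987$. It will not drop out exactly as the optimum of a program.
\end{itemize}

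The missing idea is one structural observation that supplies both the Ore-degree drop and the cross term. Take $v$ of maximum $H$-degree $\Delta$, and put $A=N_H(v)$, $B=V(H)\setminus N_G[v]$ and $F=H[B]$. For every $xy\in E(F)$, the clique condition on the pairs $va$, $xy$ with $a\in A$ forces $A\subseteq N_G(x)\cup N_G(y)$, since $x,y\notin N_G[v]$. This fact does two things:
\begin{itemize}
\item It gives $\sigma_{G[B]}(F)\le\sigma-\Delta$, so induction applies to $F$ with a genuinely smaller parameter.
\item It forces $G$-edges between $A$ and $B$. Those that are not $H$-edges are charged against the budget $\sum_{a\in A}\deg_G(a)\le\Delta(\sigma-\deg_G(v))$. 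Bipartiteness ensures that edges from $A\subseteq Y$ to $B\cap Y$ are never $H$-edges, and since $\deg_F\le\Delta$ the double count over $E(F)$ loses only a factor $\Delta$.
\end{itemize}
Together with $|E_H(N_G(v)\setminus A)|\le\Delta(\deg_G(v)-\Delta)$, this gives
\[
|E(H)|\le\Delta(\sigma-\Delta)+\frac{1}{\Delta}\sum_{x\in B\cap X}h_xf_x,
\]
where $h_x=|N_H(x)\cap A|$ and $f_x=\deg_F(x)$. The inequality $\frac{hf}{h+f}\le p^2h+q^2f$ for $p+q=1$ then reduces everything to $t(1-t)+\frac{\beta t^2(1-t)^2}{t^2+\beta(1-t)^2}\le\beta$ for $t=\Delta/\sigma\in(0,1)$, i.e.\ $P_\beta\ge 0$ on $[0,\infty)$. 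No degree thresholding, no appeal to Theorem~\ref{thm:conditional_result}, and no separate balanced case are needed.
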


Applying Theorem \ref{thm:conditional_result} with $\beta =\frac{620}{1987}$, we get the following improved bound for $\omega(L(G)^2)$. 

\begin{corollary}\label{cor:omega_line_square}
For every graph $G$,
\[
   \omega(L(G)^2)
   \le \frac{2607}{1987}\,\Delta(G)^2.
\]
\end{corollary}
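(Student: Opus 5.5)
The plan is to combine Theorem~\ref{thm:bipartite_subgraph} with Theorem~\ref{thm:conditional_result}, using $\beta=\frac{620}{1987}$. First I check that this is an admissible choice: $\frac14=\frac{496.75}{1987}\le\frac{620}{1987}\le\frac{662.33}{1987}=\frac13$, so $\beta\in[\frac14,\frac13]$. Fix a graph $G$. If $G$ has no edges the claim is trivial, so assume otherwise. Let $H$ be the subgraph of $G$ formed by the edges of a maximum clique of $L(G)^2$ together with their endpoints. Then $E(H)$ is a clique in $L(G)^2$ and $|E(H)|=\omega(L(G)^2)$.

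Next I verify the hypothesis of Theorem~\ref{thm:conditional_result}. Let $H'$ be any bipartite subgraph of $H$ (the size restriction $|E(H')|<|E(H)|$ is not even needed), and set $G'=G[V(H')]$. I apply Theorem~\ref{thm:bipartite_subgraph} to the graph $G'$ and its bipartite subgraph $H'$. The only point to check is that $E(H')$ is still a clique in $L(G')^2$, and not merely in $L(G)^2$.

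\begin{itemize}
\item Take two distinct edges $e,f\in E(H')\subseteq E(H)$. They are adjacent in $L(G)^2$, so either they share an endpoint, or some edge $g$ of $G$ joins an endpoint of $e$ to an endpoint of $f$.
\item In the second case both ends of $g$ lie in $V(H')$, so $g\in E(G')$.
\item Hence $e$ and $f$ are adjacent in $L(G')^2$ as well.
\end{itemize}

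Theorem~\ref{thm:bipartite_subgraph} then gives $|E(H')|\le\beta\,\sigma_{G'}(H')^2$, which is exactly the required assumption with $\beta=\frac{620}{1987}$.

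Theorem~\ref{thm:conditional_result} now yields $|E(H)|\le\frac{1+\beta}{4}\sigma_G(H)^2$. Since $\deg_G(x)+\deg_G(y)\le 2\Delta(G)$ for every edge $xy$, we have $\sigma_G(H)\le 2\Delta(G)$. Therefore
\[
\omega(L(G)^2)=|E(H)|\le (1+\beta)\Delta(G)^2=\frac{2607}{1987}\Delta(G)^2 .
\]

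There is no real obstacle here; all the difficulty lies in Theorem~\ref{thm:bipartite_subgraph} itself. The one step needing care is the check that the clique property passes to the induced subgraph $G'=G[V(H')]$. This holds because adjacency in $L(G)^2$ is witnessed by an edge whose endpoints are endpoints of the two edges in question.
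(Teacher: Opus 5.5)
Your proposal is correct and follows the paper's route exactly: the paper obtains the corollary by applying Theorem~\ref{thm:conditional_result} with $\beta=\frac{620}{1987}$, with Theorem~\ref{thm:bipartite_subgraph} supplying the hypothesis and $\sigma_G(H)\le 2\Delta(G)$ giving the final bound. You also correctly fill in details the paper leaves implicit, namely that $\beta\in[\frac14,\frac13]$ and that the clique property of $E(H')$ passes to $L(G[V(H')])^2$.
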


To compare these results with Theorem \ref{thm:conditional_result} and Conjecture \ref{conj:bipartite_subgraph}, observe that $\frac{620}{1987} < \frac{5}{16}$ and $\frac{2607}{1987}<\frac{21}{16}$. There is room for minor optimization in our proofs, which we avoid to keep the presentation short and clean. We believe new ideas are needed to bring down the coefficient below $1.3$ in Corollary \ref{cor:omega_line_square}. We prove Theorem \ref{thm:bipartite_subgraph} and Corollary \ref{cor:omega_line_square} in Section \ref{section:clique_number_line_square}.

\subsection{Degree-diameter problem for the edges}

For given positive integers $\Delta$ and $t$, define
\begin{align}\label{eq:h_t_definition}
     h_t(\Delta) - 1& :=\max_G \ \{ \ |E(G)|: \Delta(G)\le \Delta, \ L(G)^t \text{ is a complete graph }\}\nonumber\\
     & \le \max_G \{ \ \omega(L(G)^t): \Delta(G)\le \Delta \ \}.
\end{align}

Equivalently, $h_t(\Delta)$ is the smallest integer such that any graph $G$ with size at least $h_t(\Delta)$, maximum degree $\Delta(G)\le \Delta$, contains two edges with distance at least $t$ in $G$ (i.e., there are two vertices in the line graph $L(G)$ at distance at least $t+1$). An old problem of Erd\H{o}s and Ne\v{s}et\v{r}il \cite{Erdos_1986} concerns estimating the quantity $h_t(\Delta)$ which can be thought of as the edge-version of the well-known degree-diameter problem, see the survey \cite{Miller_Siran_2005}. 

If $t=1$, then it is clear that $h_1(\Delta) = \Delta + 1$. For $t = 2$, Erd\H{o}s and Ne\v{s}et\v{r}il \cite{Erdos_1986} and Bermond, Bond, Paoli and Peyrat \cite{Bermond_Bond_Paoli_Peyrat_1983} independently conjectured that $h_2(\Delta) \le \frac{5}{4}\Delta^2 + 1$. This was later established by Chung, Gy\'{a}rf\'{a}s, Tuza and Trotter \cite{Chung_Gyarfas_Tuza_Trotter_1990}. The open blow-ups of $C_5$ give tight examples. Cambie, Cames van Batenburg, Joannis de Verclos and Kang \cite{Cambie_Batenburg_Verclos_Kang_2022}, proved the following general upper bound for $\omega(L(G)^t)$, which by \eqref{eq:h_t_definition} gives the best known upper bound on $h_t(\Delta)$ for $t\ge 3$. 

\begin{theorem}[\cite{Cambie_Batenburg_Verclos_Kang_2022}] For any graph $G$, we have 
\[\omega(L(G)^t) \le \frac{3}{2}\Delta^t.\]
\end{theorem}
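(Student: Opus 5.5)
The plan is a double-counting argument based on balls around vertices. For a vertex $x$, let $B(x)$ be the set of edges of $G$ with at least one endpoint at distance at most $t-1$ from $x$. The number of vertices at distance exactly $i$ from $x$ is at most $\Delta(\Delta-1)^{i-1}$, and every edge of $B(x)$ is incident to some vertex of the ball of radius $t-1$. Charging each such edge to its endpoint nearest $x$, one gets
\[
|B(x)|\le \Delta\sum_{i=0}^{t-1}(\Delta-1)^i\le \Delta^t .
\]
So it suffices to find three vertices $x,y,z$ such that every edge of a maximum clique $C$ of $L(G)^t$ lies in at least two of $B(x),B(y),B(z)$. Then
\[
2|C|\le |B(x)|+|B(y)|+|B(z)|\le 3\Delta^t,
\]
which gives $\omega(L(G)^t)\le \frac32\Delta^t$.

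To choose the three vertices, I first fix an edge $e=xy\in C$. Every $f\in C$ is within distance $t$ of $e$ in $L(G)$, so some endpoint of $f$ is at distance at most $t-1$ from $x$ or from $y$. Hence $C\subseteq B(x)\cup B(y)$. Edges in $B(x)\cap B(y)$ are already counted twice. The remaining edges split into $C_x=C\cap B(x)\setminus B(y)$ and $C_y=C\cap B(y)\setminus B(x)$. If one of these is empty, then $|C|\le |B(x)|+|B(y)|-|C_x\cup C_y|$ is already small enough with a little extra care, or more simply any third vertex $z$ covering the nonempty side will do.

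Otherwise I pick $f\in C_x$ and choose $z$ to be a suitable endpoint of $f$, or a vertex on a shortest path from $f$ toward $y$. The aim is that every edge of $C_x\cup C_y$ also lies in $B(z)$. For $g\in C_y$ this should follow because $g$ and $f$ are within distance $t$. For $g\in C_x$ it should follow because $g$ is close to $x$, and $f$ was chosen extremal, for instance with an endpoint farthest from $y$ among edges of $C_x$.

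The main obstacle is this choice of $z$. The condition ``within distance $t$ of $f$'' only places an endpoint of $g$ within distance $t-1$ of \emph{one of the two} endpoints of $f$, not of a single prescribed vertex. I would first try to fix it by using both endpoints $f=zz'$ together with a parity and extremality argument on the distances from $x$ and $y$. If a single $z$ cannot be forced, the fallback is a weighted version: average the bound over the choices $z\in\{z,z'\}$ and over the roles of $x$ and $y$, and show that each edge of $C$ receives total weight at least $2$ while each ball contributes at most $\Delta^t$. This still yields the constant $\frac32$.
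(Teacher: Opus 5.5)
Your ball estimate $|B(x)|\le\Delta\sum_{i=0}^{t-1}(\Delta-1)^i\le\Delta^t$ and the inclusion $C\subseteq B(x)\cup B(y)$ for $xy\in C$ are correct. The reduction everything else rests on, however, is false in general. So the step you flag as the main obstacle cannot be repaired by a cleverer choice of $z$.

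Put $M(w):=C\setminus B(w)$. Three vertices $x,y,z$ cover every edge of $C$ at least twice exactly when $M(x),M(y),M(z)$ are pairwise disjoint. Now take $G=O_4$, $t=3$ and $C=E(O_4)$, which is a maximum clique of $L(O_4)^3$ by Lemma~\ref{lemma:diam_3_O_4}. Two $3$-sets are at distance $3$ in $O_4$ exactly when they meet in one point, so $M(U)$ is the set of edges $SS'$ with $|S\cap U|=|S'\cap U|=1$.

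If $U,V$ are equal or non-adjacent, then $M(U)\cap M(V)\neq\emptyset$. Up to $\mathrm{Sym}(7)$ you may take $U=\{1,2,3\}$ and $V=\{1,2,3\}$, $\{1,2,4\}$ or $\{1,4,5\}$. The edges $\{1,4,5\}\{2,6,7\}$, $\{2,5,6\}\{3,4,7\}$ and $\{2,4,6\}\{3,5,7\}$, respectively, then lie in both $M(U)$ and $M(V)$. Since $O_4$ is triangle-free, every choice of $x,y,z$ contains such a pair, so no admissible triple exists. In particular, once $x\sim y$ are the endpoints of $e$, a valid $z$ would have to be adjacent to both. (The paper only cites this theorem from Cambie et al., so there is no in-paper proof to compare with.)

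The weighted fallback does not close the gap either. Suppose weights $\lambda_w\ge 0$ with $\sum_w\lambda_w=3$ give every edge of $C$ total weight at least $2$. Then $2|C|\le\sum_w\lambda_w|C\cap B(w)|\le 3\max_w|C\cap B(w)|$. So any such weighting forces some single ball $B(w)$ to contain at least $\frac{2}{3}|C|$ edges of $C$, a statement that by itself already gives the theorem, and the proposal offers no argument for it.

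The concrete weighting you suggest, supported on $x,y$ and the endpoints $z,z'$ of some $f\in C_x$, also fails on $O_4$. Every single vertex, and every non-adjacent pair, can carry total weight at most $1$, because some edge of $C$ is missed by all of them. Note that $z,z'\notin\{x,y\}$ because $f\notin B(y)$. Since $O_4$ has girth $6$, the four vertices $x,y,z,z'$ (with $x\sim y$, $z\sim z'$) split into two non-adjacent pairs, which caps the total weight at $2<3$.

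To make this route work you would have to prove directly a quantitative estimate that some ball captures two thirds of $C$. That would require using the mutual closeness of the edges in $C_x$ and $C_y$ in a counting way, and nothing in the proposal points towards such an estimate.
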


In \cite{Cambie_Batenburg_Verclos_Kang_2022}, the authors further proposed the following conjectures. 

\begin{conjecture}[\cite{Cambie_Batenburg_Verclos_Kang_2022}]\label{conj:power_3} $h_3(\Delta)\le \Delta^3-\Delta^2+\Delta+2$.
\end{conjecture}

\begin{conjecture}[\cite{Cambie_Batenburg_Verclos_Kang_2022}]\label{conj:large_max_degree} For $t\ge 3$ and every
    $\varepsilon>0$,
    \[
        h_t(\Delta)\le (1+\varepsilon)\Delta^t
    \]
    for all sufficiently large $\Delta$.
\end{conjecture}

In \cite{Cambie_Batenburg_Verclos_Kang_2022}, Conjecture \ref{conj:power_3} was verified for $\Delta = 3$. Here, we disprove both of these conjectures. We first show that the $4$-regular Odd graph $O_4$ and the $15$-regular truncated Witt graph $W$ are counterexamples to Conjecture \ref{conj:power_3}. Then, using projective planes $\PG(2,q)$, we construct an infinite family of graphs $G[H,q]$ (see Lemma \ref{lemma:G[H,q]}). Taking $H$ to be $O_4$ or $W$ and letting $q\to \infty$ disproves Conjecture \ref{conj:large_max_degree} when $t = 3$ and Conjecture \ref{conj:power_3} for all sufficiently large $\Delta$. Indeed, we show the following.

\begin{theorem}\label{thm:infinite_counter_power_3}
 We have
\[
\liminf_{\Delta\to\infty}\frac{h_3(\Delta)}{\Delta^3}\ge \frac{253}{225}.
\]
Equivalently, for every $0<\varepsilon<28/225$, and sufficiently large $\Delta$, we have 
\[
        h_3(\Delta)>(1+\varepsilon)\Delta^3.
\]
\end{theorem}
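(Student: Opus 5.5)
The plan is to build explicit graphs $G$ with $\Delta(G)=\Delta$ in which every two edges are at distance at most $2$, so that $L(G)^3$ is complete and $h_3(\Delta)\ge |E(G)|+1$ by \eqref{eq:h_t_definition}. I would start from a fixed $d$-regular ``seed'' graph $H$ with this property and then blow it up with a projective plane, so that the edge count grows like $|E(H)|\,q^3$ while the maximum degree grows like $d\,q$. The seed is the $15$-regular truncated Witt graph $W$. It has $506$ vertices and hence $3795=253\cdot 15$ edges, and $3795/15^3=253/225$. This matches the constant in Theorem \ref{thm:infinite_counter_power_3}, so the blow-up must preserve the ratio $|E|/\Delta^3$ asymptotically.

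\textbf{Construction $G[H,q]$.} Fix a polarity $\perp$ of $\PG(2,q)$, so that each point $x$ has $q+1$ ``conjugate'' points $y$ with $y\in x^\perp$. Replace each vertex $v$ of $H$ by $q^2+q+1$ copies $(v,x)$, one for each point $x$. For every edge $vw\in E(H)$, join $(v,x)$ to $(w,y)$ exactly when $y\in x^\perp$. Each copy then has degree $d(q+1)$, and
\[
|E(G)|=|E(H)|(q^2+q+1)(q+1)\sim |E(H)|\,q^3 .
\]
This gives $|E(G)|/\Delta(G)^3\to |E(H)|/d^3$ as $q\to\infty$.

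\textbf{Distance condition.} The key step is to check that $L(G[H,q])^3$ is complete whenever $H$ satisfies a suitable ``parity-flexible'' distance condition; this would be the content of Lemma \ref{lemma:G[H,q]}. Take two edges $(v,x)(w,y)$ and $(v',x')(w',y')$ of $G$. We need endpoints joined by a path of length at most $2$.
\begin{itemize}
\item A path $v$--$a$--$v'$ of length $2$ in $H$ lifts for any choice of points. Indeed, $z=\langle x,x'\rangle^\perp$, the pole of the line through $x$ and $x'$ (any $z\in x^\perp$ if $x=x'$), is conjugate to both.
\item A length-$1$ path in $H$ lifts only if the specific points are conjugate, which need not happen.
\item A length-$0$ coincidence $v=v'$ lifts only if the points are equal.
\end{itemize}
So the condition I would impose on $H$ is the following. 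For every two edges $e,f$ of $H$ (including $e=f$, and edges sharing a vertex), some endpoint of $e$ and some endpoint of $f$ are joined by a walk of length exactly $2$. Then every lifted pair of edges is within distance $2$ in $G$. I would verify this for $W$ from its structure (Steiner system $S(5,8,24)$): it has girth and diameter small enough that any two edges have endpoints at distance exactly $2$ or admit a $2$-walk via a common neighbour. If $W$ fails this for adjacent or disjoint edges, I would instead use that each vertex copy class together with its neighbours has enough conjugate choices. I expect this verification, for all relative positions of two edges, to be the main obstacle. It may force a slight modification, for example adding the ``own-class'' $2$-paths through $z$ as above.

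\textbf{Passing to all large $\Delta$.} $h_3$ is nondecreasing in $\Delta$. For arbitrary large $\Delta$, choose the largest prime $q$ with $15(q+1)\le\Delta$. By prime gaps $q=\Delta/15-o(\Delta)$, so
\[
h_3(\Delta)\ge h_3\bigl(15(q+1)\bigr)>3795\,q^3(1+o(1))=\tfrac{253}{225}\Delta^3(1+o(1)).
\]
This gives the $\liminf$ bound in Theorem \ref{thm:infinite_counter_power_3}, and the $\varepsilon$-reformulation follows immediately since $253/225=1+28/225$.
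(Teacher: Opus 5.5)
\textbf{Verdict: genuine gap.} Your $G[H,q]$ construction, the lift of a length-$2$ walk through the pole of the line $xx'$, the degree and edge counts, and the prime-gap step all coincide with the paper's argument. The gap is the seed graph: you never prove that $W$ has the property you need.

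The justification you offer (``girth and diameter small enough'') does not work. Girth is irrelevant, and $\diam(W)=3$ does not imply that every two edges have endpoints at distance at most $2$. For example, in the graph with edges $uv,ux,vx,xy,yz,yw,zw$ the diameter is $3$, yet every endpoint of $uv$ is at distance $3$ from every endpoint of $zw$. Showing $\diam(L(W))\le 3$ is exactly the non-generic content of the theorem (Lemma \ref{lemma:diam_W}). Without it, the constant $\frac{253}{225}$ is unsupported.

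The missing idea uses the Steiner system directly:
\begin{itemize}
\item For edges $AB$ and $CD$ of $W$, the sets $A\cup B$ and $C\cup D$ are $16$-subsets of the $23$-set $\Omega\setminus\{\infty\}$. So they share at least $9$ points.
\item These points are split among the four disjoint sets $A\cap C$, $A\cap D$, $B\cap C$, $B\cap D$. One of them, say $U\cap V$, has at least $3$ points. Since two octads meet in $0,2,4$ or $8$ points, $|U\cap V|$ is $4$ or $8$.
\item If $|U\cap V|=8$, then $U=V$ and the edges share a vertex.
\item If $|U\cap V|=4$, take the sextet $T_0,\dots,T_5$ determined by $T_0=U\cap V$, so $U=T_0\cup T_1$ and $V=T_0\cup T_2$. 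Relabel so that $\infty\in T_5$. Then the octad $T_3\cup T_4$ avoids $\infty$ and is disjoint from both $U$ and $V$, so it is a common neighbour of $U$ and $V$ in $W$.
\end{itemize}

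Your ``parity-flexible'' condition is not a separate obstacle, and no modification of the construction is needed. For any $H$ without isolated vertices it already follows from $\diam(L(H))\le 3$:
\begin{itemize}
\item A shared endpoint $r$ gives the walk $r\,p\,r$ for any neighbour $p$ of $r$.
\item Adjacent endpoints $r\sim s$, with $e=rr'$, give the walk $r'\,r\,s$.
\end{itemize}
All the real work is in the octad argument above.
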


We prove Theorem \ref{thm:infinite_counter_power_3} in Section \ref{section:counterexample_power_3}. We remark here that Conjecture \ref{conj:large_max_degree} remains undecided for $t\ge 4$. We propose the following problem for $t=3$. 

\begin{problem} 
May it be that for all sufficiently large $\Delta$, we have $h_3(\Delta)\le \frac{253}{225}\Delta^3$~?
\end{problem}

\section{Proof of Theorem \ref{thm:bipartite_subgraph}}
\label{section:clique_number_line_square}

We first discuss a technical lemma, which is required later. 

For a given $\beta \in [0,1]$, consider the polynomial
\[  P_\beta(x)
   :=
   \beta x^4+(2\beta-1)x^3+\beta^2x^2+\beta(2\beta-1)x+\beta^2.
\]
See that 
\[ P_{\beta}(1) = 4\beta^2 + 2\beta -1 < 0\]
whenever $\beta \le \frac{1}{4}$. Furthermore, if $\beta' > \beta\ge \frac{1}{4}$, then 
\[ P_{\beta'}(x) - P_{\beta}(x) = \left(\beta' - \beta\right)\left(x^4 + 2x^3 + (\beta' + \beta)x^2 + (2\beta' + 2\beta -1)x + (\beta'+\beta)\right)>0\]
for any $x\ge 0$. So let
\[ \beta^* := \min\{\beta\in [0,1/4]: P_{\beta}(x)\ge 0 \text{ for all }x\ge 0\}.\]
One could use sophisticated computational tools to determine $\beta^*$; we avoid doing so since our preliminary computations suggest that $\beta^*$ is close to $0.312028$. Instead, we establish the following using elementary means.

\begin{lemma}\label{lemma:beta} We have 
    \[ \frac{1}{4} < \beta^* \le \frac{620}{1987} < 0.3120282 < \frac{5}{16}.\]
\end{lemma}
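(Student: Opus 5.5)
The plan has two parts. The lower bound $\beta^*>\frac14$ follows from the observations made just before the statement. The upper bound $\beta^*\le\frac{620}{1987}$ reduces to one explicit polynomial inequality. The decimal comparisons are plain arithmetic.

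First I read the definition of $\beta^*$ as a minimum over the $\beta\in[0,1]$ with $P_\beta\ge0$ on $[0,\infty)$. The interval $[0,1/4]$ in the text appears to be a typo: that set would be empty, since $P_\beta(1)<0$ for $\beta\le\frac14$. For the lower bound:
\begin{itemize}
\item $P_{1/4}(1)=\frac14+\frac12-1=-\frac14<0$, and more generally $P_\beta(1)<0$ for every $\beta\le\frac14$, so no $\beta\le\frac14$ is admissible.
\item The displayed difference $P_{\beta'}-P_\beta>0$ on $x\ge0$ for $\beta'>\beta\ge\frac14$ shows the admissible set is an up-interval in $[\frac14,1]$.
\item $P_\beta(1)$ depends continuously on $\beta$. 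So $P_\beta(1)<0$ persists for $\beta$ slightly above $\frac14$, and hence $\beta^*>\frac14$ strictly.
\end{itemize}
By monotonicity, the upper bound only requires showing that $\beta_0:=\frac{620}{1987}$ is admissible, i.e.\ that $P_{\beta_0}(x)\ge 0$ for all $x\ge0$.

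To prove $P_{\beta_0}\ge0$ I would give an explicit nonnegativity certificate with rational coefficients. The only negative coefficients are those of $x^3$ and $x$, both equal to a multiple of $(2\beta_0-1)<0$. Numerically, $\beta_0$ is essentially the threshold, so $P_{\beta_0}$ has a near-double root at some $x_0>0$. I would first locate $x_0$ by solving $P_\beta(x)=P_\beta'(x)=0$ approximately; the preliminary value $\beta^*\approx 0.312028$ comes from exactly this.

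I would then write
\[P_{\beta_0}(x)=\beta_0\,(x^2+px+r)^2+s\,(x-u)^2+R(x),\]
with $p,r,s,u$ rational approximations chosen so that $x^2+px+r$ and $x-u$ (nearly) vanish at $x_0$. Matching the $x^4$, $x^3$ and $x^2$ coefficients leaves a remainder $R(x)=ax+b$. The choice should make $a,b\ge0$, or at least make $R\ge0$ on $[0,\infty)$. A cruder alternative is weighted AM--GM: bound $(1-2\beta_0)x^3\le \beta_0x^4+\lambda x^2$ and $\beta_0(1-2\beta_0)x\le \mu x^2+\beta_0^2$ with $\lambda+\mu\le\beta_0^2$. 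This loses a little unless both equality cases occur at the same $x_0$, so I expect it to be slightly too weak at exactly $\frac{620}{1987}$. The quadratic-square certificate is the primary route.

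The main obstacle is this certificate. Because $\beta_0$ exceeds $\beta^*$ by less than $10^{-6}$, there is almost no slack, and the rational parameters must be tuned carefully. The resulting inequality for $R$ is a finite exact rational computation, which I would verify by clearing denominators.

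Finally, the numerical comparisons:
\begin{itemize}
\item $1987\cdot 0.3120282=620.0000334>620$, so $\frac{620}{1987}<0.3120282$.
\item $0.3120282<0.3125=\frac5{16}$.
\end{itemize}
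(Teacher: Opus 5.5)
Your lower bound, your correction of the interval in the definition of $\beta^*$ (it should be $[\frac14,1]$, not $[0,\frac14]$), the monotonicity reduction and the decimal comparisons are all fine. Your plan for the upper bound is also exactly the paper's: write $P_{\beta_0}(x)=\beta_0(x^2+px+r)^2+Q(x)$ with $Q$ a nonnegative quadratic.

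The gap is that you never produce this certificate. For $\beta_0=\frac{620}{1987}$ it is the entire content of $\beta^*\le\frac{620}{1987}$. Describing how you would tune parameters does not show that suitable ones exist at this particular $\beta_0$. Appealing to the general fact that a positive quartic admits such a decomposition would be circular, since it presupposes $P_{\beta_0}>0$.

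Note also that matching the $x^3$ coefficient forces $p=\frac{2\beta_0-1}{2\beta_0}=-\frac{747}{1240}$, so the only real freedom is $r$. The paper takes $r=-\frac{211}{1000}$, which leaves
\[
Q(x)=\frac{708654527125x^2-1203303717450x+510806838853}{6119661950000}.
\]
The discriminant of this quadratic is $-2478929365735048000<0$. In your notation this gives $a=0$, $b>0$, and $u=\frac{1203303717450}{1417309054250}\approx0.849$, which sits next to the near-double root $x_0\approx0.85$. The minimum of $Q$ is only about $1.4\cdot10^{-7}$, so this exact integer check cannot be skipped.

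Separately, your fallback via two AM--GM estimates loses far more than ``a little''. It requires
\[
\lambda+\mu\ge\frac{(1-2\beta_0)^2(1+\beta_0)}{4\beta_0}\approx0.149,
\]
well above $\beta_0^2\approx0.097$. The reason is that its two equality points, $x=\frac{1-2\beta_0}{2\beta_0}\approx0.60$ and $x=\frac{2\beta_0}{1-2\beta_0}\approx1.66$, are far apart. So the square certificate is the only viable route in your proposal, and it has to be carried out explicitly.
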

\begin{proof}
The inequality $\beta^* > \frac{1}{4}$ is clear from the above discussion. Moreover, 
\begin{align*}
P_{\frac{620}{1987}}(x)
& =
\frac{620}{1987}
\left(
    x^2-\frac{747}{1240}x-\frac{211}{1000}
\right)^2 \\
& \quad +\, 
\frac{
    708654527125x^2
    -1203303717450x
    +510806838853
}{
    6119661950000
}.
\end{align*}
The discriminant of the second quadratic term on the right-hand side above is  
\[
(-1203303717450)^2
-
4\cdot 708654527125\cdot 510806838853  \\
=
-2478929365735048000
<
0,
\]
which implies $P_{\frac{620}{1987}}(x)\ge 0$ for all $x\ge 0$. We conclude that $\beta^* \le \frac{620}{1987}$. 
\end{proof}

We will now argue the following. 

\begin{lemma}\label{lemma:bipartite_subgraph}
Let $G$ be a graph and let $H$ be a bipartite subgraph of $G$. If
$E(H)$ is a clique in $L(G)^2$, then
\[
   |E(H)|\le \beta^*\,\sigma_G(H)^2.
\]
\end{lemma}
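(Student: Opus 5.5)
The plan is to argue by contradiction via a minimal counterexample, in the spirit of the proof of Theorem~\ref{thm:conditional_result}, but feeding the induction hypothesis back into a \emph{bipartite} counting argument. Fix $G$ and a bipartite $H\subseteq G$ with bipartition $(A,B)$ such that $E(H)$ is a clique in $L(G)^2$. Choose $H$ with $|E(H)|$ minimal among those violating $|E(H)|\le \beta^*\sigma_G(H)^2$. Write $\sigma=\sigma_G(H)$. By minimality, every bipartite $H'\subsetneq H$ satisfies $|E(H')|\le \beta^*\sigma_{G[V(H')]}(H')^2$, since $E(H')$ is still a clique in $L(G[V(H')])^2$.

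\textbf{Step 1: covering $E(H)$ from one edge.} Pick an edge $uv\in E(H)$ with $u\in A$, $v\in B$, chosen extremally; I would first try maximizing $\deg_G(u)+\deg_G(v)$, or maximizing the smaller of the two degrees. Set $a=\deg_G(u)$ and $b=\deg_G(v)$, so $a+b\le\sigma$. Every edge $xy\in E(H)$ with $x\in A$, $y\in B$ is within distance one of $uv$. By bipartiteness this forces $y\in N_G(u)$ or $x\in N_G(v)$. Hence
\[ E(H)=E_H(N_G(u)\cap B)\cup E_H(N_G(v)\cap A). \]
Each $w\in N_G(u)$ satisfies $\deg_H(w)\le \deg_G(w)\le \sigma-a$, which yields a crude linear bound of roughly $a(\sigma-a)+b(\sigma-b)$. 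This crude bound is exactly the $\tfrac12\sigma^2$-type estimate that must be improved.

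\textbf{Step 2: applying induction to sub-configurations.} Split $N_G(u)\cap B$ and $N_G(v)\cap A$ into vertices of large and small degree, with thresholds chosen relative to $a$ and $b$. Edges at small-degree vertices are counted directly, as above. The edges spanned between the large-degree sets form a proper bipartite subgraph $H'$ (it misses $uv$, or at least some edge) whose Ore-degree in $G[V(H')]$ is strictly smaller. This is because large degree on one side forces small degree on the other, since every edge of $H$ has Ore-degree at most $\sigma$. By the induction hypothesis $|E(H')|\le\beta^*\sigma'^2$, where $\sigma'$ is expressed in terms of $a,b,\sigma$ and the thresholds.

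\textbf{Step 3: reduction to the polynomial inequality.} Normalize by $\sigma$ and set $x$ equal to a ratio such as $a/b$ (or the ratio of the two thresholds). The assumption $|E(H)|>\beta^*\sigma^2$, combined with the bounds from Steps 1--2, should then reduce, after clearing denominators, to $P_{\beta^*}(x)<0$ for some $x\ge 0$. This contradicts the definition of $\beta^*$. The $\beta^2$ coefficients in $P_\beta$ are consistent with this picture: they should arise from substituting the inductive bound $\beta\sigma'^2$ into an expression already carrying a factor $\beta$.

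The main obstacle is Step 2/3: choosing the extremal edge and the degree thresholds so that the resulting optimization collapses \emph{exactly} to the quartic $P_\beta$, with a single free parameter $x\ge0$. I would first try thresholds equal to $\sigma-a$ and $\sigma-b$ and optimize them as free parameters, expecting the optimum to produce the quartic. The boundary cases, where $H'$ is empty or equals $H$, must be handled separately by the crude bound of Step 1.
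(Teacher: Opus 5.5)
Your proposal has a genuine gap. Steps 2--3 are a template rather than an argument, and the concrete claims in Step 1 are incorrect.

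\textbf{Step 1.} Only $H$ is bipartite, not $G$. The $G$-edge witnessing that $xy$ and $uv$ are at distance at most $2$ in $L(G)$ may join $x$ to $u$, or $y$ to $v$, within one bipartition class of $H$. So $E(H)$ is covered only by the $H$-edges incident with $N_G[u]\cup N_G[v]$, not by $E_H(N_G(u)\cap B)\cup E_H(N_G(v)\cap A)$. Also, $\sigma_G(H)$ constrains only $H$-edges, so $\deg_G(w)\le\sigma-a$ holds only for $w\in N_H(u)$. For $w\in N_G(u)\setminus N_H(u)$ you can only use $\deg_H(w)\le\Delta(H)$; this is why the paper treats $N_H(v)$ and $N_G(v)\setminus N_H(v)$ separately. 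Finally, to get $\sigma_{G[V(H')]}(H')$ rather than $\sigma_G(H')$ from minimality, the minimum must be taken over all pairs $(G,H)$, not with $G$ fixed.

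\textbf{Step 2.} The more serious problem is that nothing in your decomposition forces the Ore-degree to drop. $H$-edges between $N_G(u)$ and $N_G(v)$ are automatically within distance $2$ of $uv$, so the clique condition says nothing about them, and restricting to $G[V(H')]$ loses no guaranteed degree. The remark that large degree on one side forces small degree on the other bounds individual degrees, not $\deg(x)+\deg(y)$. If both thresholds exceed $\sigma/2$, $H'$ is simply empty.

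\textbf{Missing idea.} Apply induction to the \emph{far} edges. Take $v$ of maximum $H$-degree $\Delta$, and (in the paper's notation, not your bipartition) let $A=N_H(v)$, $B=V(H)\setminus N_G[v]$, $F=H[B]$. For $xy\in E(F)$, every $a\in A$ must be $G$-adjacent to $x$ or $y$. Hence $\sigma_{G[B]}(F)\le\sigma-\Delta$, and induction gives $|E(F)|\le\beta^*(\sigma-\Delta)^2$.

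The $G$-edges from $A$ into $B$ used for this domination also consume the budget $\sum_{a\in A}\deg_G(a)\le\Delta(\sigma-\deg_G(v))$. Charging them gives $|E(H)|\le\Delta(\sigma-\Delta)+\frac{1}{\Delta}\sum_{x\in B_X}h_xf_x$. Here $B_X$ is the part of $B$ on $v$'s side, $h_x=|N_H(x)\cap A|$ and $f_x=\deg_F(x)$; only $H$-edges from $A$ to $B_X$ escape the charge.

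Then use $h_x+f_x\le\Delta$, $\sum h_x\le\Delta^2$, $\sum f_x\le\beta^*(\sigma-\Delta)^2$ and $\frac{hf}{h+f}\le p^2h+q^2f$ (with $p+q=1$ chosen suitably). This yields $\Delta(\sigma-\Delta)+\frac{\beta^*\Delta^2(\sigma-\Delta)^2}{\Delta^2+\beta^*(\sigma-\Delta)^2}$, which is at most $\beta^*\sigma^2$ exactly when $P_{\beta^*}(x)\ge 0$ at $x=\Delta/(\sigma-\Delta)$. Without the domination-based Ore-degree drop and this charging trade-off, your Step 3 has nothing to reduce to the quartic.
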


\begin{proof}
Consider the subgraph $G[V(H)]$ of $G$ induced by the vertices of $H$. It is clear that $E(H)$ is also a clique in $L(G[V(H)])^2$ and $\sigma_{G[V(H)]}(H) \le \sigma_G(H)$. Thus, we can replace $G$ with $G[V(H)]$; equivalently, we can assume that $H$ is a spanning subgraph of $G$. Throughout, let
\[
   \sigma:=\sigma_G(H)\quad \text{and} \quad \Delta:=\Delta(H).
\]
We proceed by induction on $|E(H)|$. If $|E(H)|=1$, then $\sigma\ge 2$, which implies
\[
   |E(H)|=1\le 4\beta^*\le \beta^*\,\sigma^2,
\]
since $\beta^* > \frac{1}{4}$. 

So, assume $|E(H)|\ge 2$. Choose $v\in V(H)$ such that $\deg_H(v)=\Delta$. Let $X\cup Y$ be a bipartition
of $H$ and assume $v\in X$. Define
\[
   A:=N_H(v)\subseteq Y,  \quad  C:= N_G(v)\setminus A, \quad B:=V(H)\setminus N_G[v], \quad  F:=H[B].
\]
Clearly, every edge in $E(H)\setminus E(F)$ is incident with a vertex in $N_G[v]$. Therefore
\[
   |E(H)\setminus E(F)|\le |E_H(A)| + |E_H(C)|,
\]
where $E_H(S)$ denotes the set of $H$-edges incident with $S\subseteq V(H)$.
Therefore, 
\begin{equation}\label{eq:H_bound_1}
   |E(H)|\le |E_H(A)| + |E_H(C)| + |E(F)|. 
\end{equation}
In what follows, we will estimate $|E_H(A)|$ and $|E_H(C)|$ using structural arguments and $|E(F)|$ by induction hypothesis, thus obtaining the desired bound for $|E(H)|$.

First,
\begin{equation}\label{eq:C_bound}
   |E_H(C)|\le \Delta |C| =\Delta(\deg_G(v)-\Delta).
\end{equation}

Next, we estimate $|E_H(A)|$. Note that for each $a\in A$, the edge $va$ lies in $H$, so $\deg_G(v)+\deg_G(a)\le \sigma.$
Thus
\[
   \sum_{a\in A}\deg_G(a)\le \Delta(\sigma-\deg_G(v)).
\]
Define 
\[
  B_X:=B\cap X,\quad B_Y:=B\cap Y, \quad   \Pi:= |E_G(A,B_Y)| + |E_G(A,B_X)\setminus E(H)|,
\]
where $E_G(S,R)$ denotes the edges of $G$ with one endpoint in $S$ and the other in $R$, where $S, R\subseteq V(G)$. The term $\Pi$ counts the $G$-edges incident with $A$ and $B$, which consume $G$-degree at vertices of $A$ but are not counted as $H$-edges incident with $A$. Thus, 
\begin{equation}\label{eq:A_bound}
    |E_H(A)| \le \sum_{a\in A}\deg_G(a)-\Pi \le \Delta(\sigma-\deg_G(v))-\Pi.
\end{equation}
Note that 
\[
   \Pi= \sum_{y\in B_Y} |N_G(y)\cap A| + \sum_{x\in B_X} |(N_G(x)\cap A)\setminus N_H(x)|.
\]
Take an edge $xy\in E(F)$, where $x\in B_X$ and $y\in B_Y$. For every
$a\in A$, the two $H$-edges $va$ and $xy$ are at distance at most
two in $L(G)$ since $E(H)$ forms a clique in $L(G)^2$. Since $x,y\notin N_G[v]$, this can only happen through
an edge $ax$ or an edge $ay$. Therefore,
\[
   A\subseteq N_G(x)\cup N_G(y) \implies   |A|\le |N_G(x)\cap A| + |N_G(y)\cap A|.
\]
Since $|A| = \Delta$, we get 
\[ \Delta \le  |N_G(y)\cap A|  + |(N_G(x)\cap A)\setminus N_H(x)| + |N_H(x)\cap A|. \]
For $x\in B$, define 
\[ h_x : = |N_H(x)\cap A|, \quad f_x: = \deg_{F}(x).\]
Summing over all $xy\in E(F)$ and using the fact that $f_x, f_y\le \Delta$, we get
\begin{align*}
    \Delta |E(F)|
    & \le  \sum_{y\in B_Y}|N_G(y)\cap A|f_y  + \sum_{x\in B_X}|(N_G(x)\cap A)\setminus N_H(x)|f_x + \sum_{x\in B_X}h_xf_x\\
   & \le  \left(\sum_{y\in B_Y}|N_G(y)\cap A|  + \sum_{x\in B_X}|(N_G(x)\cap A)\setminus N_H(x)|\right)\Delta + \sum_{x\in B_X}h_xf_x\\
   &  = \Pi \Delta +  \sum_{x\in B_X}h_xf_x
\end{align*}
for any $xy\in E(F)$, which implies
\begin{equation}\label{eq:Pi_bound}
   |E(F)|\le \Pi+\frac{1}{\Delta}\sum_{x\in B_X}h_xf_x. 
\end{equation}
Combining \eqref{eq:H_bound_1}, \eqref{eq:C_bound}, \eqref{eq:A_bound} and \eqref{eq:Pi_bound}, we get 
\begin{equation}\label{eq:H_bound_2}
   |E(H)| \le \Delta(\sigma-\Delta) + \frac{1}{\Delta}\sum_{x\in B_X}h_xf_x. 
\end{equation}
Note that 
\begin{equation}\label{eq:h_x_estimate}
  \sum_{x\in B_X} h_x  = |E_H(A, B_X)| \le \sum_{a\in A} \deg_H(a)\le |A|\Delta = \Delta^2.  
\end{equation}
Also, $F$ is a bipartite subgraph of $G[B]$ such that $E(F)$ induces a clique in $L(G[B])^2$. Moreover, as discussed before, for any edge $xy\in E(F)$, every vertex in $A$ is adjacent to either $x$ or $y$. This means
\[ \deg_{G[B]}(x) + \deg_{G[B]}(y) \le \deg_G(x) + \deg_{G}(y) - \Delta \le \sigma - \Delta,\]
which implies 
\[ \sigma_{G[B]}(F) \le \sigma - \Delta.\]
Since $|E(F)| < |E(H)|$, by induction hypothesis,
\begin{equation}\label{eq:f_x_estimate}
   \sum_{x\in B_X}f_x = |E(F)|\le \beta^*\ \sigma_{G[B]}(F)^2 \le \beta^* (\sigma - \Delta)^2. 
\end{equation}
Define 
\[ p: = \frac{\beta^*(\sigma-\Delta)^2}{\Delta^2 + \beta^*(\sigma-\Delta)^2} \quad \text{and} \quad q: = 1-p = \frac{\Delta^2}{\Delta^2 +\beta^*(\sigma-\Delta)^2}.\]
For any $x\in B_X$, we have $h_x + f_x \le \Delta$, which implies 
\[
   \frac{h_xf_x}{\Delta}
   \le
   \frac{h_xf_x}{h_x+f_x},
\]
with the convention that the right side is $0$ whenever $h_x + f_x = 0$. Since $p + q = 1$, it is easy to check that 
\[    \frac{h_xf_x}{h_x+f_x} \le p^2 h_x + q^2 f_x.\]
Indeed, 
\[ (p^2 h_x + q^2 f_x)(h_x + f_x) - h_xf_x = (ph_x - qf_x)^2 \ge 0.\]
Summing over $x\in B_X$, we get
\[ \frac{1}{\Delta}\sum_{x\in B_X}h_xf_x \le  p^2 \Delta^2 + q^2 \beta^* (\sigma - \Delta)^2 = \frac{\Delta^2\beta^*(\sigma-\Delta)^2}{\Delta^2 + \beta^*(\sigma-\Delta)^2},\]
by \eqref{eq:h_x_estimate} and \eqref{eq:f_x_estimate}. Hence, by \eqref{eq:H_bound_2}, we have
\[
   |E(H)| \le \Delta(\sigma-\Delta) + \frac{\Delta^2\beta^*(\sigma-\Delta)^2}{\Delta^2 + \beta^*(\sigma-\Delta)^2}.
\]
Set $t=\Delta/\sigma$. Then
\[
   \frac{|E(H)|}{\sigma^2}
   \le
   t(1-t)+
   \frac{\beta^* t^2(1-t)^2}{t^2+\beta^*(1-t)^2}.
\]
Observe that since $|E(H)|>0$, we have $t\in (0,1)$. Now, by Lemma \ref{lemma:beta}, 
\[P_{\beta^*}\left(\frac{t}{1-t}\right)\ge 0\] for all $t<1$. After simplification, one can see that this inequality is equivalent to 
\[t(1-t)+
   \frac{\beta^* t^2(1-t)^2}{t^2+\beta^*(1-t)^2}\le \beta^*. 
\]
Therefore,
\[
   |E(H)|\le \beta^*\sigma^2,
\]
as required.
\end{proof}

Theorem \ref{thm:bipartite_subgraph} now follows from Lemmas \ref{lemma:beta} and \ref{lemma:bipartite_subgraph}.  

\section{Counterexamples to some conjectures on $h_3(\Delta)$}
\label{section:counterexample_power_3}

\subsection{Small counterexamples}

Let $O_4$ be the Odd graph\footnote{See \href{https://mathworld.wolfram.com/OddGraph.html}{Odd Graph} at Wolfram MathWorld} $KG(7,3)$, i.e., $V(O_4)$ is the set of $3$-subsets of $\{1, \ldots, 7\}$ and two vertices are adjacent precisely when they are disjoint. It is known that $O_4$ is a distance regular graph of order $35$, degree $4$, size $70$ and diameter $3$ (cf. \cite{Brouwer_Cohen_Neumaier_1989}). We observe the following.

\begin{lemma}\label{lemma:diam_3_O_4} We have $\diam(L(O_4))\le 3$. Equivalently, $L(O_4)^3$ is a complete graph.
\end{lemma}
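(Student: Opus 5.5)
The plan is to work directly with the combinatorics of $3$-subsets and use the symmetry of $O_4$. An edge of $O_4$ is a pair $\{S,T\}$ of disjoint $3$-subsets of $[7]=\{1,\dots,7\}$. Such an edge is determined by the ordered partition of $[7]$ into $S$, $T$ and the leftover point $r=[7]\setminus(S\cup T)$. Two edges $e,e'$ are at distance at most $3$ in $L(O_4)$ exactly when some endpoint of $e$ and some endpoint of $e'$ are at distance at most $2$ in $O_4$. So it suffices to show the following: for any edges $\{S,T\}$ and $\{S',T'\}$, some pair $(U,U')$ with $U\in\{S,T\}$ and $U'\in\{S',T'\}$ satisfies $d_{O_4}(U,U')\le 2$.

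First I will record the distances in $O_4$ between $3$-sets $U,U'$ in terms of $k=|U\cap U'|$.
\begin{itemize}
\item $k=3$ gives distance $0$.
\item $k=0$ gives distance $1$.
\item $k=2$ gives distance $2$: a common neighbour is a $3$-subset of the $4$-element complement of $U\cup U'$.
\item $k=1$ gives distance $3$: $|U\cup U'|=5$, so no $3$-set is disjoint from both.
\end{itemize}
This matches the diameter $3$ quoted above. Hence the only bad situation for a pair $(U,U')$ is $|U\cap U'|=1$.

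The key step is then to show that the four intersections $|S\cap S'|,|S\cap T'|,|T\cap S'|,|T\cap T'|$ cannot all equal $1$. Suppose they did. Then $|S\cap(S'\cup T')|=2$, so $S$ contains the leftover point $r'$ of the second edge. By the same argument $T$ contains $r'$. This contradicts $S\cap T=\emptyset$. So some pair has intersection size different from $1$, and hence lies at distance at most $2$.

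I do not expect a real obstacle, since the argument is a short counting check. The only point needing care is the translation of distance in $L(G)$ into vertex distance in $G$. If $e=xy$ and $e'=x'y'$, a path of length $d$ in $L(G)$ from $e$ to $e'$ corresponds to a walk of length $d-1$ in $G$ from an endpoint of $e$ to an endpoint of $e'$. Thus $d_{L(G)}(e,e')\le 3$ if and only if $\min d_G(\cdot,\cdot)\le 2$ over the four endpoint pairs. I will state this explicitly, together with the case $e=e'$ or $e,e'$ sharing a vertex, which is trivially covered by $k=3$.
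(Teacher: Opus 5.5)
Your proof is correct and is essentially the paper's argument. The paper observes that the four intersections partition $(A\cup B)\cap(C\cup D)$, which has at least $5$ elements, so one intersection has size at least $2$; this gives either equal endpoints or a common neighbour in the complement of the union. Your contradiction (each of $S,T$ would have to contain $r'$) is the same count phrased locally, except that you rule out only the all-ones configuration and so also need the $k=0$ (adjacent endpoints) case, which the paper's pigeonhole step makes unnecessary.
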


\begin{proof}
Let $AB$ and $CD$ be two distinct edges of $O_4$. By the definition of $O_4$, the sets $A$ and $B$ are disjoint $3$-subsets of $\{1, \ldots, 7\}$, and so $|A\cup B|=6$. Similarly, $|C\cup D|=6$. Hence
\[
    |(A\cup B)\cap (C\cup D)|\ge 5.
\]
But the four sets
\[
    A\cap C,
    \quad A\cap D ,
    \quad B\cap C,
    \quad B\cap D
\]
partition $(A\cup B)\cap (C\cup D)$. Therefore, at least one of them has size at
least $2$. This gives endpoints $U\in\{A,B\}$ and $V\in\{C,D\}$ with
$|U\cap V|\ge 2$.

If $|U\cap V|=3$, then $U=V$ implying that $AB$ and $CD$ are incident. If $|U\cap V|=2$, then $U\cup V$ has size $4$,
so $\{1,\ldots, 7\}\setminus (U\cup V)$ is a $3$-set disjoint from both $U$ and $V$. This
$3$-set is a common neighbour of $U$ and $V$ in $O_4$ implying that the distance between $AB$ and $CD$ is at most three in $L(O_4)$. This completes the proof. 
\end{proof}

By the above Lemma \ref{lemma:diam_3_O_4}, it follows that 
\[ h_3(4)\ge |E(O_4)|+1 = 71 >  4^3-4^2+4+2=54 \]
Thus, Conjecture \ref{conj:power_3} is false for $\Delta = 4$.

Now, consider the Steiner system $S(5,8,24)$, i.e., this is the Steiner system with a point set $\Omega$ of $24$ elements, a collection $\mathcal{O}$ of $8$-subsets of $\Omega$ called \emph{octads} such that any $5$-subset of $\Omega$ is contained in exactly one octad in $\mathcal{O}$. The \emph{(large) Witt graph}\footnote{See \href{https://mathworld.wolfram.com/LargeWittGraph.html}{Large Witt Graph} at Wolfram MathWorld} has vertex set $\mathcal{O}$ and two vertices $A, B\in \mathcal{O}$ are adjacent if and only if $A\cap B = \emptyset$. 

Now, fix an element $\infty \in \Omega$. Let $W$ denote the subgraph of the Witt graph induced by the octads of $\mathcal{O}$ not containing $\infty$. The graph $W$ is known as the \emph{truncated Witt graph} \footnote{See \href{https://mathworld.wolfram.com/TruncatedWittGraph.html}{Truncated Witt Graph} at Wolfram MathWorld} (cf. \cite{Brouwer_Cohen_Neumaier_1989}).

It is known that $W$ is a distance regular graph of order $506$, degree $15$, size $3795$ and diameter $3$. We observe the following. 

\begin{lemma}\label{lemma:diam_W} We have $\diam(L(W))\le 3$, i.e., $L(W)^3$ is a complete graph.
\end{lemma}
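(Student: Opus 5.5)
The plan is to copy the argument of Lemma \ref{lemma:diam_3_O_4}: find endpoints of the two edges that share enough points, and then use the structure of $S(5,8,24)$ to produce a common neighbour in $W$. Let $AB$ and $CD$ be two distinct edges of $W$. Then $A,B$ are disjoint octads avoiding $\infty$, so $A\cup B$ is a $16$-subset of $\Omega\setminus\{\infty\}$, which has $23$ points. The same holds for $C\cup D$. Hence
\[
|(A\cup B)\cap(C\cup D)|\ge 16+16-23=9.
\]
This intersection is partitioned by $A\cap C$, $A\cap D$, $B\cap C$ and $B\cap D$. So, by pigeonhole, there are endpoints $U\in\{A,B\}$ and $V\in\{C,D\}$ with $|U\cap V|\ge 3$.

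Next I would use the standard fact that two octads of $S(5,8,24)$ meet in $0$, $2$, $4$ or $8$ points. This holds because octads are the weight-$8$ words of the extended binary Golay code, and that code is doubly even and self-orthogonal, so intersections are even. An intersection of size $6$ is also impossible, since five common points determine the octad uniquely. It follows that $|U\cap V|\in\{4,8\}$. If $|U\cap V|=8$, then $U=V$ and the edges $AB$ and $CD$ are incident. So the main case is $|U\cap V|=4$.

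In that case I would invoke the sextet property: any $4$-set $T_1$ of $\Omega$ extends uniquely to a partition $\Omega=T_1\cup\cdots\cup T_6$ into tetrads such that the union of any two tetrads is an octad. Take $T_1=U\cap V$. Every octad through $T_1$ has the form $T_1\cup T_i$, so $U=T_1\cup T_2$ and $V=T_1\cup T_3$, after relabelling. The remaining tetrads $T_4,T_5,T_6$ partition $\Omega\setminus(U\cup V)$, and this set contains $\infty$ because $U,V$ avoid it. Relabel so that $\infty\in T_6$. Then $Z:=T_4\cup T_5$ is an octad avoiding $\infty$ and disjoint from both $U$ and $V$, so $Z$ is a common neighbour of $U$ and $V$ in $W$. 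The path of edges $AB$, $UZ$, $ZV$, $CD$ (with consecutive edges sharing the vertices $U$, $Z$, $V$ respectively) shows that $AB$ and $CD$ are at distance at most $3$ in $L(W)$. This proves the lemma.

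The only real obstacle is justifying the design-theoretic facts about $S(5,8,24)$, namely the possible intersection sizes and the sextet property. I would quote both from standard references (e.g. \cite{Brouwer_Cohen_Neumaier_1989}) rather than reprove them.
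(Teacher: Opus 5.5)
Your proof is correct and follows essentially the same route as the paper. Both use the bound $|(A\cup B)\cap(C\cup D)|\ge 9$ and the octad intersection sizes $0,2,4,8$ to get $|U\cap V|\in\{4,8\}$, then take the sextet of $U\cap V$ to produce the common neighbour $T_4\cup T_5$ avoiding $\infty$. Your justification of the intersection sizes via the doubly even Golay code, and your remark that every octad through a tetrad is a union of two sextet tetrads, are small details the paper leaves implicit.
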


\begin{proof}
Let $AB, CD\in E(W)$ be two distinct edges of $W$.  Thus
\[
        A\cap B=\emptyset
        \qquad\text{and}\qquad
        C\cap D=\emptyset.
\]
The sets $A\cup B$ and $C\cup D$ are both $16$-subsets of
$\Omega\backslash \{\infty\}$. Hence,
\[
|(A\cup B)\cap(C\cup D)|\ge 16+16-23 = 9.
\]
The four sets
\[
        A\cap C,\quad A\cap D,\quad B\cap C,\quad B\cap D
\]
are pairwise disjoint and partition $(A\cup B)\cap(C\cup D)$. It is known that two octads meet in $0,2,4$, or $8$ points. Therefore, at least one of these four
intersections has size $4$ or $8$. Thus, there exist endpoints $U\in\{A,B\}$ and $V\in\{C,D\}$ such that either $U=V$, or $|U\cap V|=4$. 

If $U=V$, then the two edges $AB$ and $CD$ are incident in $W$, and
so their distance in $L(W)$ is at most $1$. So, suppose $|U\cap V|=4$.  We claim that $U$ and $V$ have a common
neighbour in $W$.  

Note that the \emph{tetrad} $U\cap V$ determines a \emph{sextet}, i.e., a partition
\[
        \Omega=T_0\cup T_1\cup T_2\cup T_3\cup T_4\cup T_5
\]
into six tetrads such that the union of any two tetrads
of the sextet is an octad with $T_0=U\cap V$ and
\[
        U=T_0\cup T_1,
        \qquad
        V=T_0\cup T_2.
\]
Since $U,V\subseteq \Omega\backslash \{\infty\}$, we can assume without loss of generality that $\infty\in T_5$. Then, $T_3\cup T_4$ is an octad of $\Omega\backslash \{\infty\}$ such that
\[
        (T_3\cup T_4)\cap U=(T_3\cup T_4)\cap V=\emptyset.
\]
In other words, $T_3\cup T_4$ is a vertex of $W$ adjacent to both $U$ and $V$ in $W$. 

Thus, for any two edges of $W$, some endpoint of one is at distance at most $2$ in $W$ from some endpoint of the other. The proof is complete.
\end{proof}

In light of the above Lemma \ref{lemma:diam_W}, we see that
\[h_3(15)\ge |E(W)|+ 1 = 3796 > 15^3-15^2+15+2 = 3167.\]
Therefore, $W$ is a counterexample to Conjecture \ref{conj:power_3} for $\Delta = 15$. 

\subsection{Infinite family of counterexamples}

We now construct an infinite family of counterexamples.

Let $q$ be a prime power and consider the finite field $\mathbb{F}_q$. For a vector $x = (x_1,x_2,x_3)\in \mathbb{F}_q^3$, we denote by $\langle x\rangle$ the $1$-dimensional subspace of $\mathbb{F}_q^3$ generated by $x$. Then $\PG(2,q)$ denotes the set of all $1$-dimensional subspaces of $\mathbb{F}_q^3$. The elements of $\PG(2,q)$ are called \emph{projective points}. It is well-known that 
\[|\PG(2,q)|=q^2+q+1.\]

Consider the standard inner product on $\mathbb{F}_q^3$ given by
\[
        \langle x,y\rangle : =x_1y_1+x_2y_2+x_3y_3,
\]
for every $x = (x_1, x_2, x_3), y = (y_1, y_2, y_3) \in \mathbb{F}_q^3$.

For a projective point $\alpha = \langle x \rangle \in \PG(2,q)$, define 
\[
        \alpha^\perp:=\{\langle y\rangle\in \PG(2,q):\langle x,y\rangle=0\}.
\]
Then $\alpha^\perp$ is a \emph{projective line} and hence
\[
        |\alpha^\perp|=q+1.
\]
Moreover, for any $\alpha, \beta\in \PG(2,q)$, the two projective lines $\alpha^\perp$ and $\beta^\perp$ meet, i.e.,
\[
        \alpha^\perp\cap \beta^\perp\ne \emptyset.
\]

Now, for a given simple graph $H$, define a simple graph $G[H,q]$ as follows. Put
\[
        V(G[H,q]):=V(H)\times \PG(2,q),
\]
and two vertices $(u,\alpha)$ and $(v,\beta)$ are adjacent in $G[H,q]$ if and only if
\[
        uv \in E(H)
        \quad\text{and}\quad
        \beta\in \alpha^\perp.
\]
The adjacency relation is symmetric because the bilinear form is symmetric:
$\beta\in \alpha^\perp$ if and only if $\alpha\in \beta^\perp$.
Equivalently, the adjacency matrix of $G[H,q]$ is given by 
\[ A(G[H,q]) = A(H)\otimes M,\]
where $A(H)$ is the adjacency matrix of $H$, $\otimes$ denotes the Kronecker product, and $M$ denotes the adjacency matrix of the \emph{looped polarity graph} of $\PG(2,q)$, i.e.,
\[ M_{\alpha, \beta} = 1 \iff \beta \in \alpha^\perp.\]
It is possible for $\alpha \in \PG(2,q)$ to lie in $\alpha^\perp$ and so $M$ can have non-zero diagonal entries. Refer \cite{Cameron_1992, Bachraty_Siran_2015} for further details on projective planes and polarity graphs.

\begin{lemma}\label{lemma:G[H,q]} Let $H$ be a $\Delta(H)$-regular graph with $\diam(L(H))\le 3$. For every prime power $q$, the following properties hold for the graph $G[H,q]$:
\begin{enumerate}[$(i)$]
    \item The graph $G[H,q]$ is regular with $\Delta(G[H,q]) =\Delta(H) (q+1)$.
    \item $|E(G[H,q])|=|E(H)|(q+1)(q^2+q+1)$.
    \item $\diam(L(G[H,q]))\le 3.$
\end{enumerate}
\end{lemma}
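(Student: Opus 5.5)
We verify the three properties directly from the definition of $G[H,q]$, using only that $|\alpha^\perp| = q+1$ for every $\alpha\in\PG(2,q)$ and that any two projective lines $\alpha^\perp,\beta^\perp$ meet.

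For $(i)$, fix a vertex $(u,\alpha)$. Its neighbours are exactly the pairs $(v,\beta)$ with $v\in N_H(u)$ and $\beta\in\alpha^\perp$. These pairs are pairwise distinct, and none equals $(u,\alpha)$ because $v\neq u$. So $G[H,q]$ is simple (even when $\alpha\in\alpha^\perp$), and
\[ \deg(u,\alpha)=\deg_H(u)(q+1)=\Delta(H)(q+1). \]
Hence $G[H,q]$ is regular of degree $\Delta(H)(q+1)$. Equivalently, row sums of $A(H)\otimes M$ are products of row sums, since $M$ has constant row sum $q+1$.

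For $(ii)$, we count by handshaking:
\[ |E(G[H,q])|=\tfrac12\,|V(H)|(q^2+q+1)\,\Delta(H)(q+1)=|E(H)|(q+1)(q^2+q+1), \]
using $|E(H)|=\tfrac12|V(H)|\Delta(H)$ by regularity of $H$.

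For $(iii)$, take two edges $e=(u,\alpha)(u',\alpha')$ and $f=(w,\gamma)(w',\gamma')$ of $G[H,q]$. Their projections $uu'$ and $ww'$ are edges of $H$. Note that $\diam(L(H))\le 3$ means some endpoint of one edge is within distance $2$ in $H$ of some endpoint of the other (as in the proofs of Lemmas \ref{lemma:diam_3_O_4} and \ref{lemma:diam_W}). So it suffices to show that for any endpoints with $H$-distance at most $2$, the corresponding lifts are at distance at most $2$ in $G[H,q]$. Then some endpoint of $e$ is within distance $2$ of some endpoint of $f$, giving distance at most $3$ in $L(G[H,q])$. Say the endpoints are $x=(a,\alpha)$ and $y=(b,\gamma)$, and split into cases on $d_H(a,b)$.

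\begin{enumerate}[$(a)$]
\item $d_H(a,b)=2$, via a common neighbour $c$. Pick $\delta\in\alpha^\perp\cap\gamma^\perp$, which is nonempty. Then $(c,\delta)$ is adjacent to both $x$ and $y$.
\item $d_H(a,b)=1$. We need $x$ and $y$ adjacent or sharing a neighbour. The edge $ab$ in $H$ lies in a triangle only sometimes, so instead we use that the edges themselves supply a path. Since the $H$-distance between the edges is realised by endpoints, we may re-choose endpoints. If $a=u$ is adjacent to $b=w$, consider instead $u'$ and $w$: then $u',u,w$ is a walk of length $2$ in $H$. If $u'\ne w$, then $d_H(u',w)\le 2$, and we want $(u',\alpha')$ and $(w,\gamma)$ to have a common neighbour $(u,\delta)$ with $\delta\in\alpha'^\perp\cap\gamma^\perp$, which exists; this lands in case $(a)$ with a common neighbour regardless of whether $u'w\in E(H)$. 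If $u'=w$, then $w$ is adjacent to $u$, and a common neighbour $(u,\delta)$ of $(u',\alpha')$ and $(w,\gamma)$ again exists.
\item $d_H(a,b)=0$, i.e.\ $a=b$. If $\alpha=\gamma$ the edges share a vertex. Otherwise pick a neighbour $c$ of $a$ in $H$ and $\delta\in\alpha^\perp\cap\gamma^\perp$; then $(c,\delta)$ is a common neighbour of $x$ and $y$.
\end{enumerate}

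The key fact throughout is that any two vertices of the form $(s,\mu)$ and $(s',\nu)$ with a common $H$-neighbour $c$ (allowing $s=s'$) have a common $G[H,q]$-neighbour $(c,\delta)$. This is the step I expect to be the main obstacle: the adjacent-endpoints case forces us to pass to a walk of length exactly $2$ by swapping to the other endpoint of $e$. Once reduced to this form, the argument is uniform.
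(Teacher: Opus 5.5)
Your proof is correct and follows the same route as the paper. Parts $(i)$ and $(ii)$ go by direct counting and handshaking, and $(iii)$ takes endpoints that either coincide in $H$ or share a common $H$-neighbour $c$, then lifts to $(c,\delta)$ with $\delta\in\alpha^\perp\cap\gamma^\perp$, using that any two projective lines meet.

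Your case $(b)$, where you swap to the other endpoint of $e$ when the chosen endpoints are adjacent in $H$, makes explicit a step the paper leaves implicit when it asserts that such $r,s$ exist. That swap is genuinely needed. Your preliminary ``it suffices'' reduction is false as literally stated: when $d_H(a,b)=1$ and $ab$ lies in no triangle, the lifts need not be within distance $2$.
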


\begin{proof}
Consider a vertex $(u,\alpha)\in V(G[H,q])$. The vertex $u$ has degree $\Delta(H)$ in $H$. For each neighbour $v$ of $u$ in $H$, the possible second coordinates $\beta$ are precisely the points of the line $\alpha^\perp$, of which there are $q+1$. Therefore, every vertex $(u,\alpha)$ of $G[H,q]$ has degree $\Delta(H)(q+1).$ This proves $(i)$.

To see $(ii)$, observe that 
\begin{align*}
|E(G[H,q])| &= \frac{1}{2}\cdot |V(G[H,q])|\cdot \Delta(G[H,q]) \\
& = \frac{1}{2} \cdot |V(H)|\cdot |\PG(2,q)| \cdot  \Delta(H) \cdot (q+1)\\
& = |E(H)| \cdot (q+1)\cdot (q^2 + q + 1).  
\end{align*}

We now prove $(iii)$. Let
\[
        e=\{(u,a),(v,b)\}
        \quad \text{and}\quad 
        f=\{(z,c),(w,d)\}
\]
be two distinct edges of $G[H,q]$. Then $uv$ and $zw$ are edges of $H$. Since $\diam(L(H))\le 3$, there exist endpoints $r\in\{u,v\}$ and
$s\in\{z,w\}$ such that either $r=s$ or there is a common neighbour of $r$ and $s$ in $H$. 
 
Let $\alpha$ denote the projective point such that $(r,\alpha)$ is an endpoint of $e$, and let $\beta$ denote the projective point such that $(s, \beta)$ is an endpoint of $f$. Since the two projective lines $\alpha^\perp$ and $\beta^\perp$
meet, choose
\[
        \gamma\in \alpha^\perp\cap \beta^\perp.
\]

If $r=s$, choose $p$ to be any neighbour of $r$ in $H$. If $r\neq s$, then choose $p$ to be the common neighbour of $r$ and $s$ in $H$. In either case, 
\[
        (r,\alpha)\sim (p,\gamma)\sim (s,\beta).
\]
Thus, some endpoint of $e$ is at distance at most $2$ from some endpoint of $f$ in $G[H,q]$. Consequently, the two edges $e$ and $f$ have distance at most $3$ in
the line graph $L(G[H,q])$. Since $e$ and $f$ were arbitrary, we conclude that $\diam L(G[H,q])\le 3$.
\end{proof}

\begin{lemma}\label{lemma:h_3_lower_bound}
    Let $H$ be a $\Delta(H)$-regular graph with $\diam(L(H))\le 3$. Then 
    \[ \liminf_{\Delta \to \infty} \frac{h_3(\Delta)}{\Delta^3}\ge \frac{|E(H)|}{\Delta(H)^3}.\]
\end{lemma}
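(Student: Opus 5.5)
The plan is to apply Lemma \ref{lemma:G[H,q]} along the sequence of prime powers $q$, and then use a monotonicity argument to pass from that sparse sequence of maximum degrees to all large $\Delta$. Write $d:=\Delta(H)$ and $m:=|E(H)|$.

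First, for each prime power $q$, Lemma \ref{lemma:G[H,q]} says $G[H,q]$ has maximum degree $d(q+1)$, has $m(q+1)(q^2+q+1)$ edges, and $L(G[H,q])^3$ is complete. By the definition \eqref{eq:h_t_definition} this gives
\[ h_3(d(q+1)) \ge m(q+1)(q^2+q+1)+1. \]

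Next we need monotonicity. The function $h_3$ is nondecreasing in $\Delta$: any graph with $\Delta(G)\le \Delta$ also satisfies $\Delta(G)\le\Delta'$ for every $\Delta'\ge\Delta$, so the maximum in \eqref{eq:h_t_definition} can only grow. Now take an arbitrary large $\Delta$ and let $q$ be the largest prime power with $d(q+1)\le \Delta$. Then
\[ h_3(\Delta)\ge h_3(d(q+1)) > m q^3. \]

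The main step is making sure $q$ is comparable to $\Delta/d$, and this is where prime number theory enters. By Bertrand's postulate, or more sharply by the prime number theorem giving $p_{n+1}/p_n\to 1$, there is a prime in $\bigl((1-\varepsilon)x, x\bigr]$ for all large $x$. Applying this with $x=\Delta/d-1$ yields $q\ge(1-\varepsilon)(\Delta/d-1)$. Hence
\[ \frac{h_3(\Delta)}{\Delta^3}\ge \frac{m(1-\varepsilon)^3(\Delta/d-1)^3}{\Delta^3}\to \frac{m(1-\varepsilon)^3}{d^3}. \]
Letting $\varepsilon\to0$ gives the claimed bound on the $\liminf$.

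The only nontrivial input is this density of primes: Bertrand's postulate alone would lose a factor of $8$, so we genuinely need $p_{n+1}/p_n\to1$. This is a standard consequence of the prime number theorem, and we will cite it.
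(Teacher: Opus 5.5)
Your proposal is correct and follows essentially the same route as the paper. Both arguments use the prime number theorem to pick a prime $p$ with $\Delta(H)(p+1)\in[(1-o(1))\Delta,\Delta]$, then combine Lemma~\ref{lemma:G[H,q]} with $h_3(\Delta)\ge h_3(\Delta(H)(p+1))$. Your write-up also states explicitly two things the paper uses tacitly: that $h_3$ is nondecreasing, and the $+1$ coming from the definition of $h_3$.
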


\begin{proof}
Assume $\Delta$ to be arbitrary and sufficiently large. By the Prime Number Theorem, for any $\varepsilon > 0$ and any sufficiently large number $N$, there exists a prime $p\in [(1-\varepsilon)N, N]$. Take $N = \frac{\Delta}{\Delta(H)}-1$ and choose a prime $p$ so that
\[ \Delta(1-o_{\Delta}(1)) \le \Delta(H)(p+1) \le \Delta. \]
Then, using Lemma \ref{lemma:G[H,q]}, we see that
\begin{align*}
    \frac{h_3(\Delta)}{\Delta^3} & \ge \frac{h_3(\Delta(H)(p+1))}{\Delta^3}\\
    &\ge \frac{|E(G[H,p])|}{\Delta(H)^3(p+1)^3}\cdot \frac{\Delta(H)^3(p+1)^3}{\Delta^3} \\
    & = \frac{|E(H)|(p+1)(p^2+p+1)}{\Delta(H)^3(p+1)^3}\cdot (1-o_\Delta(1))^3 \\
    & = \frac{|E(H)|}{\Delta(H)^3}(1-o_p(1)).
\end{align*}
The assertion follows. 
\end{proof}

We can now take $H$ to be either the odd graph $O_4$ or the truncated Witt graph $W$ to construct the desired counterexamples. 

Taking $H = O_4$ and using Lemmas \ref{lemma:diam_3_O_4} and \ref{lemma:h_3_lower_bound}, we have 
\[ \liminf_{\Delta\to\infty}\frac{h_3(\Delta)}{\Delta^3}\ge \frac{35}{32}.\]

Again, taking $H=W$, and using Lemmas \ref{lemma:diam_W} and \ref{lemma:h_3_lower_bound}, we get 
\[
\liminf_{\Delta\to\infty}\frac{h_3(\Delta)}{\Delta^3}\ge \frac{253}{225}.
\]
Since $\frac{253}{225}>\frac{35}{32}$, the truth of Theorem \ref{thm:infinite_counter_power_3} is clear.

\section*{Acknowledgements}
Bojan Mohar is supported in part by the NSERC Discovery Grant R832714 (Canada), by the ERC Synergy grant (European Union, ERC, KARST, project number 101071836), and by the Research Project N1-0218 of ARIS (Slovenia). The authors thank Aida Abiad for inspiring our research on graph powers.

\section*{AI statement}
We acknowledge the use of AI tools during the ideation phase. We declare that the text is not AI-generated.

\bibliographystyle{plain}
\bibliography{references}

\vspace{0.4cm}

\affl{Hitesh Kumar}{hitesh.kumar.math@gmail.com, hitesh\_kumar@sfu.ca}{Department of Mathematics, Simon Fraser University, Burnaby, Canada}

\affl{Bojan Mohar}{mohar@sfu.ca}{Department of Mathematics, Simon Fraser University, Burnaby, Canada\\On leave from FMF, Department of Mathematics, University of Ljubljana.}
 
\affl{Shivaramakrishna Pragada}{shivaramakrishna\_pragada@sfu.ca, shivaramkratos@gmail.com}{Department of Mathematics, Simon Fraser University, Burnaby, Canada}

\end{document}